\font\smallit=cmti10
\font\smalltt=cmtt10
\renewcommand\section{\@startsection {section}{1}{\z@}
{-30pt \@plus -1ex \@minus -.2ex}
{2.3ex \@plus.2ex}
{\normalfont\normalsize\bfseries\boldmath}}
\renewcommand\subsection{\@startsection{subsection}{2}{\z@}
{-3.25ex\@plus -1ex \@minus -.2ex}
{1.5ex \@plus .2ex}
{\normalfont\normalsize\bfseries\boldmath}}
\renewcommand{\@seccntformat}[1]{\csname the#1\endcsname. }
\newtheorem{theorem}{Theorem}
\newtheorem{lemma}{Lemma}
\newtheorem{corollary}{Corollary}
\theoremstyle{definition}
\newtheorem{definition}{Definition}
\newcommand{\roller}{\circledcirc}
\newcommand{\N}{\mathcal{N}}
\renewcommand{\P}{\mathcal{P}}
\newcommand{\R}{\mathcal{R}}
\newcommand{\Le}{\mathcal{L}}
\begin{document}

\begin{center}
\uppercase{\bf \boldmath Mis\`ere \sc{cricket pitch}}
\vskip 20pt
{\bf Richard J. Nowakowski\footnote{Research supported by}}\\
{\smallit Department of Mathematics \& Statistics, Dalhousie University, NS, Canada}\\
{\tt r.nowakowski@dal.ca}\\ 
\vskip 10pt
{\bf Ethan J. Saunders\footnote{Research supported by}}\\
{\smallit Department of Mathematics \& Statistics, Dalhousie University, NS, Canada}\\
{\tt ejs062003@gmail.com}\\ 
\end{center}
\vskip 20pt
\centerline{\smallit Received: , Revised: , Accepted: , Published: } 
\vskip 30pt

\centerline{\bf Abstract}
\noindent
Mis\`ere games in general have little algebraic structure, but if the games under consideration have properties then some algebraic structure re-appears. In 2023, the class of Blocking games was identified. Mis\`ere \textsc{cricket pitch} was suggested as a problem at the Games-at-Dal-2023 Workshop, and is the first game in this class to be studied.  Normal play \textsc{cricket pitch} was analyzed in 2011.  

The game involves flattening `bumps' with a roller. The main reduction of normal play, reducing every bump by $2$, is not applicable in mis\`ere play.
 In this paper, we find the outcomes of single (linear) component \textsc{cricket pitch} positions, where
 the proof is based on first considering the bumps to the left and to the right of the roller separately. We also give reductions, true in Blocking games in general, of positions that occur in simple positions of \textsc{cricket pitch}. These allow us to find the outcomes of the disjunctive sum of single bump positions. At the time of writing, it was not possible to find game values since the relevant theory for Blocking games does not exist. 

\pagestyle{myheadings}
\markright{\smalltt INTEGERS: 24 (2024)\hfill}
\thispagestyle{empty}
\baselineskip=12.875pt
\vskip 30pt

\newcommand{\carlos}[1]{{\bf [Carlos:} {\magenta #1}{\bf ]}}
\newcommand{\rjn}[1]{{\bf [Richard:} {\blue #1}{\bf ]}}
\newcommand{\Neil}[1]{{\bf [Neil:} {\red #1}{\bf ]}}
\newcommand{\Reb}[1]{{\bf [Rebecca:} {\grey #1}{\bf ]}}

\tolerance=1 
\emergencystretch=\maxdimen
\hyphenpenalty=10000
\hbadness=10000

\section{Introduction}\label{Rules}
Taking care of a real-life cricket pitch is a an unenviable task (\cite{Manual}). One important task is ensuring the surface is flat. A roller (possibly weighing several tons) is used to flatten the bumps, and a flat section should not be rolled again, if possible. A simplified game version of caring for a pitch was introduced in \cite{NowakO2011}. It was noted that the game was an instance of a larger class of games and the values  of the normal play version was solved, to within an infinitesimal. In this paper, we start the analysis of \textsc{cricket pitch}, and the larger class, under the mis\`ere winning convention.\\

\noindent \textit{Rules.}  A \textsc{cricket pitch} board is a sequence of bumps (positive integers) and one roller situated between two bumps or is at one end.
Left (Right) moves the roller to the left (right) any positive number of bumps.  Each
bump that is rolled over is reduced by $1$.  A value of $0$ is no longer a bump, and
can no longer be rolled over. \\

For example, with Left moving first (not best play):
\[2\, 3\, 2\roller4\,2\,\frac{L}{}\,2\roller2\,1\,4\,2\,\frac{R}{}\,2\,1\,0\roller4\,2=\roller4\,2\]
would have Left losing the Normal play game and winning the mis\`ere play game.

\textsc{cricket pitch} has the property that  if a player moves twice, the resulting position could have been achieved in one move. That is
$G^{\Le\Le}\subseteq G^\Le$ and $G^{\R\R}\subseteq G^\R$.
These types of games are called \textit{hereditarily closed} (originally option-closed). For mis\`ere this class has been extended to \textit{Blocking games}, $B$: if, say Left, has no moves in $G$ then, in every $G^R$ either Left still has no move, or there is $G^{RL}$ in which Left has no move.
In \textsc{cricket pitch} this would be accomplished by moving the roller to the end.

For normal play, in \cite{NowakO2011}, it is shown that a single pitch has the value $\{a|b\}+ \text{infinitesimal}$,
where $a$ and $b$ are dyadic rationals.  An important step is that subtracting $2$ from every bump (providing no bump goes negative) does not change the value. The disjunctive sum of \textsc{cricket pitch} positions is now easy to analyze. Mis\`ere play is not so easy.  In mis\`ere, this reduction does not preserve the value. For instance, Left, moving first, wins $2,1\roller1,2 + 2\roller2,1$ but loses $4,3\roller3,4 + 2\roller2,1$.
 In this paper, Theorems \ref{thm:1st pitch}, \ref{thm:2nd pitch} give the outcomes for one-pitch \textsc{cricket pitch}. Lemma \ref{lem:reductions} give some reductions that are true for the Blocking universe not just \textsc{cricket pitch}. 
The reductions require the following standard theory: for positions $G, H\in B$

\begin{enumerate}
\item $o(G)$ is the outcome of $G$, and $o_L(G)$ ($o_R(G)$) is the outcome of $G$ with Left (Right) playing first. Note, both $o_L(G)$ and $o_R(G)$ are either a left-win or a right-win.
\item For a universe $U$, and $G,H\in U$, $G\equiv_U H$ if $o(G+X)=o(H+X)$ for all $X\in U$.
\end{enumerate}
 
 \noindent Following the conventions introduced by Conway ("On Numbers and Games" ), and Berlekamp, Conway and Guy ("Winning Ways"), when the context is clear, $G\equiv_U H$ is replaced by $G=H$.\\
 
\subsection{The Larger Context}
The Blocking universe\footnote{A \textit{universe} is a set of games $U$ such that if $G\in U$ then (i) all options of $G$ are in $U$; (ii) $-G\in U$; (iii) if $H\in U$ then $G+H\in U$; and (iv) if both $G$ and $H$ are non-empty then $\{G\mid H\}\in U$.} is a superset of the other studied universes, Dicotic and Dead-ending.

As examples of some results, the following are easy to prove after reading the next two sections, especially Lemma \ref{lem:reductions}:
\begin{itemize}
\item  $1\roller+\roller1 = 0$; $2n\roller = 0$;
\item   $1\roller1\ne0$; $2n\roller2n\ne0$; 
\item $1\roller1+1\roller1\ne0$;
 $2n\roller2n+2n\roller2n\ne0$; $2,1\roller+\roller1,2\ne0$;
\end{itemize}

In mis\`ere universes, the Conjugate Conjecture states that if $G+H=0$ then $H$ is $G$ with the roles of Left and Right interchanged. This is often called `turning the board around'. This is true in normal play. It is not necessarily true in non-universes, for example, Milley\cite{Mill}. The last line of the previous examples, show positions that are most probably zugzwang positions, i.e., positions without inverses and in which neither player wants to move.

This section concludes with the obligatory analysis: in both
\[1,1,6,2,4,5\roller4,3,3,4,6\quad\text{and}\quad 6,4,2,1\roller1,3,5,7,8\]
find who wins and how?

\section{Outcomes}\label{Outcomes}
We will be considering $G$ as a position of the form $a_{m}a_{m-1}\ldots a_{1}\roller b_1b_2\ldots b_n$, where 
$\roller$ is the roller, and the size of the bumps are given by $a_i$ and $b_j$. We will call the bumps to the left of the roller the \textbf{left}-side and the others the \textbf{right}-side. We will refer to the $i$-th bump, the context will make it clear if it is on the left or right. In our analysis, as play continues, we keep the original left-and right-sides, they will not be updated unless explicitly redefined. To avoid writing all the subscripts every time, we will write $G=\alpha\roller\beta$, with the subscripts implied. Any result mentioning one player, is  also true for the other and usually will not be explicitly stated.

\begin{lemma}\label{lem:one-side}
Let $G = \alpha\roller$ where $\alpha = a_0,\dots,a_k$ is a nonempty string of positive values. Then, 
$o(G)=\N$ if and only if $\alpha$ contains at least one even, non-zero number, otherwise, $o(G)=\R$. 
\end{lemma}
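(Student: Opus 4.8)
The plan is to reduce the two-sided outcome to a single calculation and then run an induction on the total height $\sum a_i$. First, since every bump lies to the left of the roller, Right has no legal move in $G=\alpha\roller$, so by the mis\`ere convention the player to move (Right) wins: $o_R(G)=\R$ unconditionally. Hence $o(G)$ is either $\N$ or $\R$, and it is $\N$ exactly when $o_L(G)=\L$. Thus the entire content is to show that \emph{Left, moving first, wins if and only if $\alpha$ contains an even bump}. Throughout I will lean on the blocking rule: a $0$ is an impassable wall, so if a player rolls the roller over a run whose far (last-rolled) bump drops to $0$, the roller ends up with a $0$ immediately on the \emph{opponent's} side, leaving the opponent unable to cross it; by mis\`ere play that stranded opponent then wins.

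For the direction ``all bumps odd $\Rightarrow o_L(G)=\R$'' I would exhibit a strategy for Right against an arbitrary Left move. Suppose Left rolls over the nearest $j$ bumps. Because every bump was odd, the last-rolled (leftmost) bump becomes $0$ precisely when it had value $1$. If it was $1$, then a $0$ now sits immediately to the right of the roller, Right is blocked, and Right wins at once. Otherwise the leftmost rolled bump is $\ge 2$ and positive, and Right rolls the roller back over the maximal positive block to its right, decrementing those bumps a second time; the value-$1$ bumps that Left turned into interior $0$'s are exactly the walls that make this reply land cleanly. One checks that the resulting position again has all odd bumps, has strictly smaller total height, and is again Left to move, so the induction hypothesis applies and Right wins.

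For the direction ``some bump even $\Rightarrow o_L(G)=\L$'' I would give Left an explicit winning first move. Let $a_p$ be the even bump nearest the roller, so that $a_{p+1},\dots,a_k$ are all odd, and let Left roll over $a_p,\dots,a_k$. The result is a position $\gamma\roller\delta$ whose right side $\delta$ begins with the \emph{odd} value $a_p-1$. Now Right must play on the right side, and Left answers by rolling back over whatever block Right just advanced, a mirroring confined to $\delta$ that keeps the leading value of $\delta$ odd while strictly reducing the height. Carried to the end, Right is eventually forced either to expose a $0$ adjacent to the roller (blocking Left, who then wins by being unable to move) or into a strictly smaller all-odd one-sided position with Right to move; by the board-reversed form of the previous case together with the induction hypothesis, that position is a loss for the mover Right. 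Either way $o_R(\gamma\roller\delta)=\L$, so Left's first move wins.

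The hard part will be the bookkeeping of the $0$-walls created during play, since the whole argument hinges on them. The two delicate points are (a) verifying that Right's restoring reply in the all-odd case is always legal and strictly decreases $\sum a_i$ despite the interior $0$'s left behind by Left's value-$1$ bumps, and (b) showing that Left's mirroring in the even case cannot be sidestepped by Right switching to a different block. Point (b) is exactly where the blocking rule does the real work: because a $0$ confines each player to its own side and isolates the exposed odd leading bump of $\delta$, Left's mirror is well defined and Right's options are genuinely forced. Once these are pinned down, the induction on total height closes both directions.
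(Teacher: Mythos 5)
Your reduction to $o_L(G)$ and your handling of the all-odd case are sound and essentially match the paper: Right, having no legal move, wins outright going first, and against any Left move in an all-odd position Right rolls back as far as the first newly created $0$ allows, leaving a reachable position that is again all odd, strictly smaller, and Left to move. The gap is in the other direction. Your prescribed first move for Left --- roll over the adjacent run of odd bumps \emph{and} the nearest even bump $a_p$ in one sweep --- is not a winning move in general. Take $G=2,3\roller$. Your move goes to $\roller 1,2$, and Right wins from there: he rolls over both bumps to $0,1\roller$, Left's only move is to $0\roller 0$, and Right, unable to move, wins. (Left does win $2,3\roller$, but by rolling over only the $3$: $2\roller 2$, then the forced $2,1\roller$, then $\roller 1,0$, then $0\roller 0$ with Left stranded. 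The position $4,3\roller$ fails the same way under your move.) The failure is exactly your ``delicate point (b)'', and it resolves against you: whenever the leading bump of $\delta$ is $1$ --- immediately if $a_p=2$, and after finitely many mirroring rounds otherwise, since that leading value is odd and drops by $2$ per round --- Right rolls over it \emph{and keeps going}. The resulting $0$ then lies strictly to the left of the roller, so Left cannot complete her mirror; she is trapped on the far side of a wall with moves she is forced to make while Right runs out of moves first and wins under the mis\`ere convention. Your fallback claim that Right is otherwise pushed into ``a strictly smaller all-odd one-sided position with Right to move'' also does not describe what arises: the positions reached are two-sided and contain the even remnant of $a_p$.

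The paper's strategy for the even case is genuinely different and is what is needed: Left never jumps to the even bump. If the nearest even bump is adjacent to the roller she rolls over it alone; otherwise she rolls over only the adjacent bumps of size $1$ plus one further bump. The $0$s she creates then pin Right to a single forced reply over exactly one bump, after which the position has returned to one of the same two shapes with smaller totals, and the terminal position is $\beta,0\roller$ with Left unable to move, hence a Left win. You would need to replace your single sweeping first move with such an incremental scheme (or exhibit some other provably winning first move) before your induction on total height can close.
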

\begin{proof} In $G = \alpha\roller$, Right always wins moving first. Now consider Left moving first. 

Suppose $\alpha$ consists of odd numbers. Clearly, $o(1\roller)=\R$.  For any other $\alpha$,  Right responds to Left's move by moving the roller as far to the right as possible (an empty bump may have been created by  Left's move). This results in a position $\beta\roller$ where $\beta$ consists of odd numbers, and Right wins by induction.

Suppose  $G=\beta,2k\roller$. Left moves the roller one position, forcing Right to move to  
$\beta,(2k-2)\roller$ and Left wins by induction. Now suppose $G=\beta,2k,\gamma,(2j+1)\roller$ where 
$\gamma$ consists of odd numbers.  
Left moves the roller over all the bumps of size $1$ in $\gamma$, if any, and then one further bump. The position is
either (i) $\beta,2k,\gamma'\roller(2j')$, where, now, all the bumps in $\gamma'$ are odd and at least $3$, or (ii) $\beta\roller(2k-1)$. 
In the first case, Right can only move to 
  $\beta,2k,\gamma',(2j'-1)\roller$, and in the second, Right can only move
   to $\beta,(2k-2)\roller$. In both cases, Left (moving first) wins by induction. 
\end{proof}

This leads to a reduction for outcomes.
\begin{lemma}\label{lem:removeodd}
Let $G=\alpha\roller\beta,(2d+1)$ with $\beta$ non-empty, then $o(G) =o(\alpha\roller \beta)$.
\end{lemma}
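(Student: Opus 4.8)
The plan is to prove, by induction on the total size $\sum a_i+\sum b_j$, the slightly more general fact: in any position whose right\nobreakdash-most bump is odd and which has at least one further positive bump to the right of the roller (this is the hypothesis ``$\beta$ non\nobreakdash-empty''), deleting that right\nobreakdash-most bump leaves the outcome unchanged. Throughout I would use the basic feature of the rules that a bump reduced to $0$ becomes an impassable wall, so any bump lying beyond a wall is frozen and may be deleted for free. Writing $G=\alpha\roller\beta,(2d+1)$ and $G'=\alpha\roller\beta$, it suffices to establish $o_L(G)=o_L(G')$ and $o_R(G)=o_R(G')$ separately.

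The Left direction is the easy half, and I would do it first. Every Left move slides the roller leftward over bumps of $\alpha$ only, and so never disturbs the trailing bump $(2d+1)$; moreover after such a move the right\nobreakdash-hand side can only have grown, so the trailing bump is still odd and still has a companion to its left. Hence Left's options in $G$ and in $G'$ are in bijection, corresponding options again satisfy the hypotheses, and induction equates their outcomes; since ``Left is stuck'' depends only on what sits immediately left of the roller, it holds in $G$ exactly when it holds in $G'$. The very same argument applied to the position $P=\alpha,\beta^-\roller(2d+1)$ reached by rolling over all of $\beta$ (with $\beta^-$ denoting $\beta$ with each entry decreased by $1$) shows $o_L(P)=o_L(P')$, where $P'=\alpha,\beta^-\roller$; this is the one fact about the ``deep'' positions I will need later.

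The Right direction is the crux. Right's moves fall into three types: the shallow ones rolling over a proper prefix of $\beta$, which match the corresponding moves of $G'$ and are equated by the inductive hypothesis; the move to $P$ above (rolling over all of $\beta$ but not the trailing bump), whose counterpart in $G'$ is the maximal move to $P'$; and, available only in $G$, the move to $Q=\alpha,\beta^-,(2d)\roller$, which rolls over the trailing bump as well. Because $o_L(P)=o_L(P')$, the options of $G$ and of $G'$ produce the same values $o_L(\cdot)$ apart from the extra option $Q$, so it is enough to show that if $o_L(Q)$ is a Right win then $o_R(G')$ is already a Right win. If $d=0$ the trailing bump rolls down to $0$, i.e.\ to a wall, and $Q=P'$, so no new option appears. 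If $d\ge 1$ then in $Q$ the former trailing bump has become the even, roller\nobreakdash-adjacent bump $2d$; by Lemma~\ref{lem:one-side}, applied to the live segment running left from the roller (everything beyond a wall being frozen), $Q$ is an $\N$\nobreakdash-position, so $o_L(Q)$ is a Left win and $Q$ is useless to Right. In either case the implication holds and $o_R(G)=o_R(G')$.

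The main obstacle is exactly this control of the single extra option $Q$: one must see that rolling over the odd trailing bump converts it into an even bump adjacent to the roller, which by Lemma~\ref{lem:one-side} is a first\nobreakdash-player (hence here a Left) win and so can never help Right. The remaining work is bookkeeping around walls---verifying that in every inductive branch the trailing bump is either still reachable with a positive bump between it and the roller, or has been sealed off behind a wall (whence its deletion is trivially harmless)---together with observing why the hypothesis that $\beta$ be non\nobreakdash-empty is essential: without it $P'$ has empty right\nobreakdash-hand side, the option $Q$ is no longer redundant, and indeed the outcomes of $\roller(2d+1)$ and $\roller$ differ.
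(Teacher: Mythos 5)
Your proof is correct and follows essentially the same route as the paper's: the only way the trailing odd bump can ever matter is through Right's extra option of rolling over it, and that option backfires because it leaves an even bump adjacent to the roller at the end, a position Left wins moving first (the paper runs the $(2d)\roller$ ping-pong argument inline where you invoke Lemma~\ref{lem:one-side}). One small repair is needed in your $d=0$ case: $Q=\alpha,\beta^{-},0\roller$ is not equal to $P'=\alpha,\beta^{-}\roller$, since in $Q$ the roller is trapped behind the freshly created $0$ while in $P'$ Left may still have moves; however, the conclusion you need still holds immediately, because Left is stuck in $Q$ and therefore wins it under the mis\`ere convention, so $Q$ is again useless to Right.
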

\begin{proof} Note that $o(\roller)\ne o(\roller 1)$, since $\roller$ is a next player win and $\roller1$ is a Left win. Thus $\beta$ cannot be empty. If, on any move, Right moves the roller to the end, then Left can regard the game as being $(2d)\roller$.  After $2d$ moves, it is Left to move in $0\roller$ which she wins. Thus, if Right wins $G$ playing first or second, he never plays to the end, which proves the result.
\end{proof}
This reduction is not necessarily true for values, and cannot be used in the disjunctive sum of \textsc{cricket pitch} positions.
\begin{corollary}[Odd tails are removable]
Let $G=\alpha\roller\beta,\gamma$ where every element of $\gamma$ is odd.
 Then $o(G) =o(\alpha\roller \beta)$.
\end{corollary}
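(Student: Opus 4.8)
The plan is to derive the corollary as a finite iteration of Lemma~\ref{lem:removeodd}, peeling the odd tail off one bump at a time from the right. Write $\gamma = c_1,\dots,c_m$ with every $c_i$ odd and induct on $m$, the length of the tail. The case $m=0$ is vacuous, so I would assume $m \geq 1$ together with the statement for all strictly shorter tails (with the same $\alpha$ and $\beta$).

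For the inductive step I would single out the rightmost bump $c_m$, write $c_m = 2d+1$, and regard $G$ as $\alpha\roller\beta',(2d+1)$, where $\beta' = \beta,c_1,\dots,c_{m-1}$ is the whole portion of the right side lying to the left of $c_m$. Provided $\beta'$ is non-empty, Lemma~\ref{lem:removeodd} applies directly and yields $o(G) = o(\alpha\roller\beta')$. The position $\alpha\roller\beta'$ has base $\beta$ followed by the shorter odd tail $c_1,\dots,c_{m-1}$, so the induction hypothesis gives $o(\alpha\roller\beta') = o(\alpha\roller\beta)$, and chaining the two equalities produces the desired $o(G) = o(\alpha\roller\beta)$. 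Thus each peel is a single invocation of the lemma and the induction closes in exactly $m$ steps.

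The one point that genuinely needs care — and the only substantive content — is guaranteeing that the hypothesis ``$\beta'$ non-empty'' of Lemma~\ref{lem:removeodd} survives every peel. Since $\beta'$ always contains $\beta$ as a prefix, it is enough that $\beta$ itself be non-empty, and I would run the whole induction under this standing assumption (inherited from Lemma~\ref{lem:removeodd}). This non-emptiness is not a technicality that can be discarded: if $\beta$ were empty the final peel would fail, since $o(1\roller 1)=\N$ whereas $o(1\roller)=\R$ by Lemma~\ref{lem:one-side}, so an odd tail cannot in general be stripped past an empty base. With $\beta$ non-empty, however, $\beta'$ is non-empty at every stage, each application of Lemma~\ref{lem:removeodd} is immediate, and the argument goes through cleanly.
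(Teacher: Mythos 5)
Your proof is correct and is essentially the paper's own argument (the paper's entire proof is ``repeatedly apply Lemma~\ref{lem:removeodd}''); your explicit induction on the tail length and your attention to the non-emptiness hypothesis of that lemma are welcome, since that hypothesis is indeed what keeps every peel legitimate and fails only at the last step when $\beta$ is empty. One small correction to your side remark: $o(1\roller 1)=\P$, not $\N$ --- Left's only move is to $\roller\,0\,1$, where Right is blocked by the $0$ and, having no move, wins under the mis\`ere convention (equivalently, apply Theorem~\ref{thm:1st pitch}(1) with $(o(1\roller),o(\roller 1))=(R,L)$); since this still differs from $o(1\roller)=\R$, your point that an odd tail cannot be stripped past an empty base stands.
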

\begin{proof}
Repeatedly apply Lemma \ref{lem:removeodd}.
\end{proof}

A position $G=\alpha\roller\beta$ is \textit{reduced} if each sequence is either empty or ends with an even
number.

Lemma \ref{lem:one-side} gives us the outcomes of $\alpha\roller$ and $\roller\beta$. Combining these gives four possibilities for $G=\alpha\roller\beta$, and three are easy to analyze.

\begin{theorem}\label{thm:1st pitch}
Let $G$ be a game of the form $\alpha\roller\beta$, where either sequence could be empty. 
\begin{enumerate}
\item If $(o(\alpha\roller),o(\roller\beta)) = (R,L)$ then $o(G)=P$.
\item If $(o(\alpha\roller),o(\roller\beta))= (R,N)$ then $o(G)=R$.
\item If $(o(\alpha\roller),o(\roller\beta))= (N,L)$ then $o(G)=L$.
\end{enumerate}
\end{theorem}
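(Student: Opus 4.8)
The plan is to reduce each case to a smallest canonical position and then run an induction in which the decisive moves either mirror the opponent or flatten a bump of size $1$ to $0$, using that a $0$ blocks the roller and so can strand an entire side. First I would record the reduction tools. Since the two parts of $G$ are governed by the one-side outcome of Lemma~\ref{lem:one-side}, and since Lemma~\ref{lem:removeodd} together with its corollary (and their left-right mirrors, valid by the stated symmetry) delete all-odd tails and heads without changing the outcome, I may peel any all-odd side down to a single odd bump. Thus in Part~1 it suffices to treat $G=p\roller q$ with $p,q$ odd, and in Parts~2 and~3 it suffices to treat $G=p\roller\beta$ (respectively $\alpha\roller q$) in which the all-odd side is a single odd bump.

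For Part~1 I would prove $o(p\roller q)=\P$ by induction on $p+q$. With Left to move, her only move rolls the single bump $p$. If $p=1$ it flattens to a $0$, which then sits on top of the right stack and blocks Right, so Right has no move and wins by the mis\`ere convention; if $p\ge 3$, Right replies by rolling the one resulting even bump back, reaching $(p-2)\roller q$ with Left to move, which is $\P$ by induction, so Left loses. Right moving first is the mirror image, giving $o(p\roller q)=\P$; un-peeling via the corollary finishes Part~1.

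For Part~2 (Part~3 being its left-right mirror) the target is $o(p\roller\beta)=\R$, where $p$ is odd and $o(\roller\beta)=\N$, i.e. $\beta$ is empty or contains an even bump. The direction with Left moving first runs exactly as above: Left is forced to roll $p$, and either $p=1$ strands Right behind a fresh $0$, or $p\ge 3$ lets Right roll the even bump back to the Part~2 position $(p-2)\roller\beta$ with Left to move, which is $\R$ by induction. The direction with Right moving first is where the real work lies: I would have Right import his winning opening move from the one-side game $\roller\beta$ (it exists because $o(\roller\beta)=\N$), and thereafter answer every Left foray into the left side by the Lemma~\ref{lem:one-side} mirror while continuing the $\roller\beta$ strategy on his own turns.

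The hard part will be this last coordination: I must show that the single odd bump $p$ buried at the far left never yields Left a useful move. The mechanism to exploit is again the blocking behaviour of $0$: whenever Left rolls into the left side, either she flattens her top bump to $0$ and walls herself off (so $p$ stays hidden and inaccessible), or Right's mirror regenerates an odd top above $p$, and in every branch the $\roller\beta$ victory for Right is preserved with $p$ acting as inert dead wood. Making this airtight---verifying that no sequence of Left's rolls can peel the left stack down to expose $p$ at a moment when rolling it would help---is the step I expect to require the most careful case analysis, and it is precisely where the mis\`ere blocking convention, under which a flattened bump seals off everything beyond it, does the decisive work.
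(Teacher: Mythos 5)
Your reduction of an all-odd side to its single innermost bump via Lemma \ref{lem:removeodd} and its corollary is legitimate, and your treatment of Part 1 is a complete and correct proof by a genuinely different route: the paper never reduces, instead giving a confinement argument (Right simply refuses ever to enter the right side) for general $\alpha\roller\beta$, whereas you collapse to $p\roller q$ and run a short induction on $p+q$. The same goes for the easy half of Parts 2 and 3 (Left moving first in Part 2, Right in Part 3). The problem is the other half, which you correctly flag as ``where the real work lies'' and then do not carry out; since that half is the entire content of Parts 2 and 3 beyond what Part 1 already gives, the proposal as written proves only Part 1.

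Moreover, the sketch you offer for that missing half rests on a picture of the game that is not right. You describe $p$ as ``buried at the far left'' under a ``stack'' that Left might ``peel down to expose,'' and worry about her ``flattening her top bump'' so that ``$p$ stays hidden.'' In $p\roller\beta$ the bump $p$ is not buried: it sits immediately to the left of the roller's starting slot, and on any turn Left may roll leftward across the intervening nonzero bumps of $\beta$ and over $p$ itself, so there is no ``exposing'' to prevent. The real issue is a strategy switch, which is exactly what the paper's proof supplies: while the roller stays at or to the right of its original slot, every move by either player is literally a move of $\roller\beta$, so Right plays his $\N$-strategy there, and if that game ends it ends with Right walled off by a $0$ that also walls him off in $G$; the first time Left pushes the roller strictly left of the original slot, the bumps remaining to the \emph{left} of the roller are still all odd (her crossing move only evens out bumps that end up on the roller's right), so Right abandons the right side permanently and wins by the Lemma \ref{lem:one-side} confinement response. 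Your phrase ``answer every Left foray \dots while continuing the $\roller\beta$ strategy on his own turns'' has Right playing both games at once, which he cannot do in a single move per turn; without the explicit, permanent switch and the parity observation that justifies it, the argument does not close.
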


\begin{proof}
Suppose $(o(\alpha\roller),o(\roller\beta)) = (R,L)$. When  Left moves first, Right's strategy is to avoid moving the roller into the right-side. This way, the entire game will take place within the left-side. 
Since $\alpha\roller$ is a Right win, we know Right will eventually be in a position where he has to move and the roller is $\ldots a_{-k}\roller 0_{-k+1}\ldots$. That is, Right has won $G$. If Right moves first, Left uses the same strategy.

Suppose $(o(\alpha\roller),o(\roller\beta)) = (R,N)$.
Suppose Left moves first then Right wins by applying the same strategy as in the previous case.
Now, suppose Right moves first. His strategy is to play a winning move in $\roller\beta$. If Left responds in the right-side (i.e. the roller stays to the left of its initial position) then, Right continues to play winning moves in the right-side. Either the game ends and Right wins, or Left moves the roller into the
left-side. Now Right can play as in the previous paragraph after Left move first.

The case $(o(\alpha\roller),o(\roller\beta)) = (N,L)$ is analogous to the previous case.
\end{proof}

The remaining case splits into sub-cases and is reminiscent of the normal-play analysis.

\subsection{Analysis of $(\N,\N)$}

We now assume that $G=\alpha\roller\beta$ and $o(\alpha\roller)=o(\roller\beta)=N$. Intuitively,
our approach is: each player on their first move can move into `their' component and make the appropriate winning move. The component that finishes first will determine the winner. For example, if it is the left-side then Right's last move must be in $\ldots\roller1_{-k},0_{-k+1}\ldots$. Since the $1,0$ bumps have been rolled an odd number of times, initially, they were even and odd respectively. Since there could be such a pair in the right-side, we must determine which pair reaches the end first. In addition, we have to take into account bumps between the odd-even pairs that are eliminated before one of the pairs reaches the $0,1$ stage.

In what remains, the least odd number that cannot be isolated by the opponent is of importance.

\begin{definition} Let $\gamma=a_1,a_2,\ldots,a_s$ be a sequence of positive integers. Let
 $m(\gamma)=\max\{i: \text{ $a_i$ is odd, and } a_i\leq a_j, j<i, \}$, and, if $m(\gamma)$ exists, let $M(\gamma)=a_{m(\gamma)}$. Otherwise, $m(\gamma)=M(\gamma)=\infty.$ 
  \end{definition}

Recall that in $G=\alpha\roller\beta$ the sides are indexed from the roller out. 
For example, if $G=1,2,1,5,3\roller5,3,2,3$, then $m(\alpha)=5$, $M(\alpha)= 1$, $m(\beta)=2$, and $M(\beta)=3$, and 
if $G=2,3,1,2\roller2,3$  then $m(\alpha)=2$, $M(\alpha)= 1$, and $m(\beta)=M(\beta)=\infty$. 
The bumps $a_{m(\alpha)}$ and $b_{m(\beta)}$ are important, in that the players want to push the roller that far and no further.
\begin{theorem} \label{thm:2nd pitch}
Let $G=\alpha\roller\beta$ be a reduced \textsc{cricket pitch} position where $\alpha$ and $\beta$ are  non-empty. 
\begin{enumerate}
\item If $M(\alpha) < M(\beta)$, then $o(G)=\Le$.
\item If $M(\alpha)  > M(\beta)$, then $o(G)=\R$.
\item If $M(\alpha)  = M(\beta)<\infty$, then $o(G)=\N$.
\item If $M(\alpha)  = M(\beta)=\infty$, then $o(G)=\P$.
\end{enumerate}
\end{theorem}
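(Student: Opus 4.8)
The plan is to argue by strong induction on the total size $s(G)=\sum_i a_i+\sum_j b_j$, which strictly decreases with every move. The degenerate positions are already handled by the earlier results and serve as the base of the induction: once a move empties a side we are in a position $\alpha\roller$ or $\roller\beta$ governed by Lemma~\ref{lem:one-side}, and any position that leaves the $(\N,\N)$ regime is settled by Theorem~\ref{thm:1st pitch}; whenever a move produces an odd tail I first re-reduce using Lemma~\ref{lem:removeodd} and its corollary, which does not change the outcome. The key dynamical point is that a bump that reaches $0$ becomes a wall that the roller cannot cross, so each move can only shrink the playable interval and, after each move, I re-evaluate the four cases on the current reduced configuration and its current values $M(\cdot)$. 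The guiding picture is the race announced before the statement: $a_{m(\alpha)}$ and $b_{m(\beta)}$ are Left's and Right's respective finish lines, the running-minimum condition in the Definition guarantees that all bumps between the roller and the finish line are at least $M$, so the finish line is always reachable, and $M(\alpha)$ versus $M(\beta)$ measures which player can first create the terminal trap $\cdots\roller 1,0\cdots$ that forces the opponent to seal themselves between two walls and thereby make the last move.

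For the strict cases, say case~$1$ with $M(\alpha)<M(\beta)$, I would give Left an explicit strategy and prove it preserves a Left-winning position after every reply. Left drives the roller toward her own critical bump $a_{m(\alpha)}$ and no further, grinding it down; because the intervening bumps are all at least $M(\alpha)$ this is always legal and never forces her past the finish line. The inductive claim is that after Left's move and any Right answer the resulting re-reduced position is again a case~$1$ position for Left, or a boundary position awarded to Left by Lemma~\ref{lem:one-side} or Theorem~\ref{thm:1st pitch}, and that the strict inequality $M(\alpha)<M(\beta)$ is restored. Since Left's critical value is strictly the smaller, her finish line is reached first, so Right is the player eventually compelled to roll a lone $1$ into a $0$ and wall himself off; in mis\`ere that is a loss for Right whether he moved first or second, giving $o(G)=\Le$. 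Case~$2$ is the mirror image.

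The tie cases are then short. In case~$3$, with $M(\alpha)=M(\beta)<\infty$, I expect the first player to win by breaking the tie: advancing the roller to one's own critical bump strictly lowers one's own critical value and leaves the opponent to move in a position where the mover's value is now strictly smaller, i.e. a case~$1$ or case~$2$ position won by the mover through the previous paragraph, so $o(G)=\N$. In case~$4$, with $M(\alpha)=M(\beta)=\infty$, neither side has an odd running minimum, so every running minimum out from the roller is even and there is no finish line to aim at; here I expect the second player to win by a responding (pairing) strategy that keeps the running minima even on whichever side the first player attacks, so that the first player is always the one forced to produce the first odd record and ultimately the sealing move. Equivalently, one checks that any first move turns the vacated side into one carrying a finite critical value while the opponent's side still has value $\infty$, handing the opponent, now to move, a case~$1$, $2$ or $3$ position (or a Lemma~\ref{lem:one-side} position) that the opponent wins. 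As no first move helps, these positions are genuine zugzwangs and $o(G)=\P$.

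The step I expect to be the real obstacle is making the strict-case induction honest, namely controlling exactly how $m(\cdot)$ and $M(\cdot)$ change under an arbitrary move in the presence of walls. Four phenomena must be handled simultaneously: an \emph{overshoot} that rolls past the critical bump and drops the running minimum to the wrong parity; a move that \emph{crosses the centre}, after which the former inner bumps become the new outer ones (this is precisely the transition that turns an even nearest bump into an odd critical bump and drives the case~$4$ argument); the \emph{creation of a wall} when a bump hits $0$, which truncates the board; and the attendant re-indexing. Proving that the leader in a strict case can always restore both $M(\alpha)<M(\beta)$ and the correct parity of the remaining race against every defence — including the opponent pushing the roller back across the centre or deliberately overshooting — is where the argument is most delicate, and I would organize it as a finite case analysis on the opponent's move type, discharging the degenerate outcomes through Lemma~\ref{lem:one-side}, Lemma~\ref{lem:removeodd} and Theorem~\ref{thm:1st pitch}.
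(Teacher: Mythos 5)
Your sketch has the right picture (a race to the critical bumps $a_{m(\alpha)}$ and $b_{m(\beta)}$, with degenerate positions discharged by Lemma~\ref{lem:one-side} and Theorem~\ref{thm:1st pitch}), but it is a plan rather than a proof: the step you yourself flag as ``the real obstacle'' --- showing that the leader can restore the invariant $M(\alpha)<M(\beta)$ after an arbitrary reply, including overshoots, centre crossings, wall creation and re-indexing --- is exactly the content of the theorem, and it is left undone. The paper does not carry out that invariant-maintenance case analysis at all; it sidesteps it with two devices. First, for the player moving first with $a_\ell\le b_r$, Left simply repeats a fixed ``basic strategy'' (push the roller to just outside bump $a_\ell$), and instead of tracking how $M$ evolves one argues by contradiction: if this strategy is ever blocked, the blocking $0$ must sit on the right-side and must have come from an odd bump $b_k$ with $b_r\le b_k<a_\ell$, contradicting $a_\ell\le b_r$. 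Second, for the player moving second, Left's reply is to return the roller to its original position, producing $\alpha\roller(\beta_1-2)\beta_2$; since $a_\ell$ and $b_r$ are both odd, the strict inequality $a_\ell<b_r$ gives $a_\ell\le b_r-2\le M((\beta_1-2)\beta_2)$, and the first-player argument takes over. Neither of these appears in your proposal, and without something like them your induction has no engine.

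Two of your specific claims are also not right as stated. In case~4 you assert that after the first move ``the opponent's side still has value $\infty$''; this is false --- truncating $\alpha$ to $\alpha'$ can create a finite $M(\alpha')$ even when $M(\alpha)=\infty$ (e.g.\ $\alpha=3,2$ truncated to $3$). What must be proved, and what the paper's Claim plus the comparison $a_s<a_t$ establishes, is that the resulting position is always a case-2 position for the opponent: $M(\text{new right side})<\infty$ \emph{and} $M(\text{new left side})>M(\text{new right side})$. Your hedge ``a case~1, 2 or 3 position that the opponent wins'' does not work, because a case-1 position would be won by the first player, not the opponent; ruling out case~1 is precisely the point. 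Similarly, in case~3 the mover's bumps $a_\ell,\dots,a_1$ end up on the \emph{other} side of the roller after the move, so the claim that the position becomes ``a case~1 or case~2 position won by the mover'' needs the re-orientation of $M$ handled explicitly; the paper avoids this by proving the first-player win directly for $a_\ell\le b_r$ (weak inequality), which covers the tie without any hand-off between cases.
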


\begin{proof}
Since $G$ is reduced, then the last bumps in $\alpha$ and $\beta$ are even. Let $\alpha = a_p,a_{p-1},\ldots,a_1$ and, for brevity, let $\ell = m(\alpha)$ and $\alpha' = a_p,a_{p-1},\ldots,a_{\ell+2}$. Note that 
$a_{\ell+1}$ exists since $G$ is reduced ($\alpha$ doesn't end on an odd number) and $a_\ell$ is odd. Let $r=m(\beta)$. One strategy is central to this proof.

\begin{definition} 
With Left to move, her \textit{basic} strategy is:

if $a_\ell>1$, then move to $\alpha',a_{\ell+1}\roller (a_{\ell}-1),\ldots,a_0-1,\beta$; 

if $a_\ell=1$, then move to $\alpha',a_{\ell+2}\roller (a_{\ell+1}-1),0$.

Right's basic strategy is analogous.
\end{definition}
\noindent
Parts (1,2 3): Suppose $m(\alpha)<\infty$.

Let $s=\min\{i: a_i\text{ is even, and }i>\ell\}$ and $t=\min\{j: b_j\text{ is even, and }j>r\}$. 
Because $G$ is reduced, and if $m(\alpha)$ and $m(\beta)$ are not infinite, then $s$ and $t$ exist.

Suppose $a_\ell\leq b_r$, $\ell<\infty$, and Left moves first. We assume that Left always attempts to play her basic strategy. If she can, then she eventually plays to $\alpha',a_{\ell+2},\roller (a_{\ell+1}-1)$ with Right to move. Left now  moves the roller over only the $\ell+1$ bump. This leads to the position $\alpha',a_{\ell+2},0,\roller$ and Left to move, i.e., she wins.

If, on some Left move, Left could not play her basic strategy, the
position would be of the form $\gamma_1,a'_\ell,\gamma_2,0,\gamma_3\roller\delta$, and $a'_\ell\leq a_\ell$.
 The $0$-bump cannot be in the left-side (i.e. $\alpha$) by the choice of $\ell$. Therefore, it is on the right-side and must have started as an odd sized bump, say $b_k$. Now, $a_\ell>b_k$ because, if Left had played $a_\ell$ moves (using the basic strategy), the $a_\ell$ bump would have been reduced to $0$ and the roller would be to the left of that zero bump. Also, $b_k$ is
 less than $b_i$, $i=0,1,\ldots k-1$, since each of those bumps is reduced by $1$ whenever the $k$th bump is. By definition, $b_r\leq b_k$, thus $b_r<a_\ell$ which contradicts our assumption.  Note that this does not depend on $r<\infty$.

To recap, if $a_\ell\leq b_r$, and $\ell<\infty$, then Left wins playing first. 
 
Suppose $a_\ell<b_r$ and Right moves first. Let $(\gamma-2)$ represent the sequence $\gamma$ with every number reduced by $2$.
Left's response is in two parts, first move the roller back to the original position. This is possible since $b_r>1$, every bump in $\beta$ is at least $2$. 
At this intermediate point, the position is $\alpha\roller(\beta_1-2)\beta_2$,
 where $\beta_1\beta_2=\beta$ and it is Left to move. In this new position, $m(\alpha)=\ell$, $M(\alpha)=a_\ell$,
  and $M((\beta_1-2)\beta_2)\geq b_r-2$. However, $a_\ell<b_r$, and both are odd, therefore
  $a_\ell\leq b_r-2\leq M((\beta_1-2)\beta_2)$. The previous argument shows that Left wins, and she completes her move using that strategy.

This proves cases $1$, $2$, and $3$.\\

\noindent
Part (4): By Lemma \ref{lem:one-side} we may assume that there are non-zero even-sized bumps on both sides of the roller.   Since $m(\alpha)=m(\beta)=\infty$, then both $a_1$ and $b_1$ are even.

\textit{Claim:} If $m(\alpha)=m(\beta)=\infty$, and, for some $i$, $a_i$ is odd, then there is  $p$, $i>p$ such that $a_p$ is even and $a_i>a_p$.

\textit{Proof:} Again, note that $a_1$ is even. The proof is by induction. By the definition of $m(\alpha)$, there is $j<i$ such that $a_j<a_i$. If $a_j$ is even, then the statement is true. If $a_j$ is odd, then, by induction, there exists $p$,  $j>p\geq 1$ with $a_j>a_p$, and $a_p$ even. In conclusion, $a_i>a_j>a_p$ and $i\geq 1$, and the Claim is proved.\\

Now, consider Left moving first, moves over $k$ bumps, for some $k\geq 1$. 
Now $G=\alpha'\gamma\roller\beta$, where $\gamma=a_k,a_{k-1},\ldots,a_1$ and $\alpha=\alpha'\gamma$
Let $G''=\roller\delta$, where $\delta=(\gamma-1)\beta$. If $a_k$ is even, then 
$M(\delta)$ exists since $a_k-1$ is a candidate. If $a_k$ is odd, then by the Claim there is an even-sized 
bump in $a_{k-1},\ldots,a_1$, smaller than $a_k$. Let $a_s$ be the least sized even bump in $a_{k-1},\ldots,a_1$. In particular, $a_s$ is also less than any odd-sized  bump in $a_k,\ldots,a_s$. Therefore, $a_s-1$
is a candidate for $M(\delta)$. Since $m(\beta)=\infty$, then $M(\delta)=a_s$.
In either case $M(\delta)<\infty$.

 If $m(\alpha')=\infty$, then Right wins by part $2$ of the Theorem. Suppose $m(\alpha')<\infty$, and $M(\alpha')=a_t$. By definition, every even-sized bump in $a_{t-1},\ldots,a_{k+1}$ is greater than $a_t$.
 However, since $m(\alpha)=\infty$ then, there is some even-sized bump in $\gamma$ smaller than $a_t$, in particular  $a_s<a_t$. Therefore, $M(\alpha')>M(\delta)$, with Right to play. By part $2$, Right wins.

The argument for Right going first is analogous, thus $o(G)=\P$.
\end{proof}

\section{Disjunctive sums of one-bump positions}\label{One-sidegames}

Although the next Lemma will only be applied for \textsc{cricket pitch}, we prove it in the more general context
of Blocking games. This allows us to apply them in the disjunctive sum of\textsc{cricket pitch} positions.
\begin{lemma}\label{lem:reductions}[Reductions] Let $d$ and $e$ be non-negative integers, respectively, odd and even.
Within the blocking universe,  $B$,  
\begin{enumerate}
\item $e\roller=0$;
\item $d\roller=1\roller$;
\item $\roller1+1\roller = 0$;
\end{enumerate}
\end{lemma}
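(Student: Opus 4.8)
The plan is to read each identity $G\equiv_B H$ through the definition of equivalence and verify it by comparing outcomes: I would show that $o(G+X)=o(H+X)$ for every $X\in B$, which for parts (1) and (3) is the statement $o(G+X)=o(X)$. Each verification is a pairing (strategy-stealing) argument carried out by induction on the number of bumps in $X$ together with the size of the reducible component, proving the stronger assertion that whichever player wins $X$ (moving first, respectively second) also wins the augmented game (moving first, respectively second). The base outcomes of the isolated components are exactly those supplied by Lemma~\ref{lem:one-side}.

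For parts (1) and (3) the mechanism is that the extra material can only be played in \emph{matched pairs}. In part (1) the component $e\roller$ is a forced oscillation $e\roller\to\roller(e-1)\to(e-2)\roller\to\cdots\to0$: at each stage exactly one player has a legal move inside the component, and which player that is alternates in lockstep with the turn order (from an even configuration $c\roller$ only Left can roll, from an odd configuration $\roller c$ only Right can). Consequently the winner of $X$ can adopt the rule ``answer a move in $X$ by the prescribed $X$-reply, and answer an opponent's roll of the component by rolling it straight back''; each such exchange deletes a matched pair of moves and lowers the chain by $2$, leaving $X$ untouched. In part (3) the two summands $1\roller$ and $\roller1$ are conjugates whose only moves go to $0$, and the analogous rule answers a move in one summand by the mirror move in the other. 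In both cases the surviving play projects onto a play of $X$ alone, so the winner of $X$ should win the sum.

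Part (2) is then a telescoping of the part-(1) mechanism. Since $d$ is odd, the first forced pair of the oscillation, $d\roller\to\roller(d-1)\to(d-2)\roller$, returns the chain to $(d-2)\roller$ with the \emph{same} player to move; pairing this exchange as above gives $o(d\roller+X)=o((d-2)\roller+X)$ for all $X$, and iterating down the odd chain reduces $d\roller$ to $1\roller$. Thus part (2) needs no separate idea once the pairing of part (1) is in hand.

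The hard part is the mis\`ere terminal bookkeeping, not the description of the strategies. Because the player who cannot move \emph{wins}, the projection-to-$X$ argument is only valid if the play ends with the correct player stuck, and a surviving fragment of the reducible component can supply that player with one last move and flip the result. The resolution is a parity count: the winner of $X$ must arrange to run out of moves with the chain in a configuration they cannot touch --- an even configuration $c\roller$ when Right is the intended winner (Right cannot roll from $c\roller$), an odd configuration $\roller c$ when Left is the intended winner (Left cannot roll from $\roller c$) --- and the evenness of $e$ (respectively $d-1$, and the single-move-each balance of the conjugate pair in part (3)) is exactly what makes this required parity consistent with the number of moves the pairing strategy consumes. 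The Blocking property is invoked here to turn ``no move left in $X$'' into a genuine terminal position of the sum, so that the mis\`ere outcome of $X$ is faithfully reproduced. Verifying this parity and terminal analysis in each of the relevant starting-player cases is where the real work lies.
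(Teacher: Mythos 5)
Your proposal takes essentially the same route as the paper: each identity is verified by comparing $o(G+X)$ with $o(H+X)$ for all $X\in B$ by induction, with the intended winner of $X$ answering moves in $X$ by their winning reply and rolls of the reducible component by rolling it back, and with the Blocking property used to re-establish a ``no moves for me'' position after the opponent disturbs $X$ --- the terminal/parity bookkeeping you defer is exactly the case analysis the paper carries out (and itself partly leaves to the reader). The only cosmetic divergence is part (2), where the paper computes $d\roller=\{\,\roller(d-1)\mid\ \}=\{\,0\mid\ \}=1\roller$ directly from part (1) rather than telescoping $d\roller\equiv_B(d-2)\roller$.
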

\begin{proof} Part ($1$): Recall that $o_L(G), o_R(G)$ are the outcomes when Left, respectively, Right, moves first.

We first show that $o(G+e\roller) \geq o(G)$. First, the situation when Left moves first.
Suppose $o_L(G)=R$ then automatically $o_L(G+e\roller) \geq o_L(G)$. Suppose $o_L(G)=L$, that is, she has a winning move to some $G^L$, or there are no left options. If there is a winning left option $G^L$, then in
$G+e\roller$, we claim that $G^L+e\roller$ is a winning move. Right must move
 to some $G^{LR}+e\roller$, but then $o_L(G^{LR})=L$ and Left wins by induction. If Left has no option in $G$, then, in $G+e\roller$, we claim Left wins by moving to $G+\roller(e-1)$. If Right plays to $G+(e-2)\roller$ then Left wins by induction. If Right plays to $G^R+e\roller$, then since Left had no move in $G$, in $G^R$ she has a move to some $G^{RL}$ in which she has no options. Since $o_L(G^{RL})=L$, Left wins $G^{RL}+e\roller$
 by induction.
 
Consider Right moving first. If $o_R(G)=R$ then automatically $o_R(G+e\roller) \geq o_R(G)$.
Suppose that $o_R(G)=L$, and consequently, Right has an option. In $G+e\roller$, Right moves to some $G^R+e\roller$. In $G^R$, suppose Left has a winning move to some $G^{RL}$. Thus we are in the position 
$G^{RL}+e\roller$ with $o_R(G^{RL})=L$ which Left wins by induction. If, in $G^R$ Left has no move, that is, $o_L(G^R)=L$, then by the first part, $o_L(G^R+e\roller)=L$.

Thus, $o(G+e\roller) \geq o(G)$.

Now, we show that $o(G)\geq o(G+e\roller)$. Suppose $o_L(G+e\roller)=R$ then automatically 
$o(G)\geq o(G+e\roller)$.  

If $o_L(G)=R$  then Right wins $G+e\roller$ going second: (i) if $G^L$ has no right option then $G^L+e\roller$ has no right option; (ii) by moving to either $G+(e-2)\roller$ (wins by induction) or to $G^{LR}+e\roller$
where $G^{LR}$ is Right's winning move in $G^L$, and again Right wins by induction. Therefore, $o_L(G)=L\geq o(G+e\roller)$.

If $o_R(G)=R$, then there is nothing to prove. Suppose $o_R(G)=L$. In $G+e\roller$, Right plays to 
$G^R+e\roller$. In $G^R$, Left has a winning move to $G^{RL}$, so she responds with
$G^{RL}+e\roller$ and wins by induction.

Thus $o(G)\geq o(G+e\roller)$, proving $e\roller\equiv_B0$.

Part ($2$): Note that
\begin{eqnarray*}
d\roller&=&\{ \roller(d-1)\mid\,\}\\
&=&\{ 0\mid  \},\quad\text{by part 1,}\\
&=&1\roller.
\end{eqnarray*}
Part ($3$): $\roller1+1\roller= 0$.

We first show that $o(\roller1+1\roller+X)\geq o(X)$. If $o_L(X)=R$ then there is nothing to prove.
Suppose  $o_L(X)=L$ and consider Left playing first in $roller1+1\roller+X$. If this is to  
$\roller1+1\roller+X^L$ then Right has a winning move, $X^{LR}$, in $X^L$. Therefore, Right playing to
$\roller1+1\roller+X^{LR}$ wins by induction. If Right has no move in $X^L$, then he moves to $1\roller +X^L$.
If Left now plays to $X^L$, Right wins. If Left plays to $1\roller +X^{LL}$, then either Right has no move and wins, or plays to $1\roller +X^{LLR}$, in which he has no move. Eventually, Left has to play to $\roller +X'$
in which Right has no move.

The other cases are similar and are left to the reader.
\end{proof}

Finding the outcome of a disjunctive of one-bump positions is now straightforward. Applying, Lemma \ref{lem:reductions}, will show that at most one player has a move.

\begin{theorem}\label{thm:disj sum} Let $G = \sum_{i=1}^n G_i$, where $G_i$ is a one-bump position. Let $\ell$ be the number of Left-win components, and $r$ be the number of Right win. 
\begin{eqnarray*}
o(G) =\begin{cases} L\text{ if $\ell>r$;}\\
N\text{ if $\ell=s$; and}\\
R\text{ if $\ell<r$;}
\end{cases}
\end{eqnarray*}
\end{theorem}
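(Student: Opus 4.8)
The plan is to use Lemma~\ref{lem:reductions} to replace every summand by one of three canonical one-bump positions, and then simply read off the outcome of what survives. First I would record the outcome of an arbitrary one-bump position. Such a position is $a\roller$ or $\roller a$ for a positive integer $a$, and Lemma~\ref{lem:one-side} (together with its left--right mirror, valid because any statement about one player holds for the other) gives $o(a\roller)=o(\roller a)=N$ when $a$ is even, $o(a\roller)=R$ when $a$ is odd, and $o(\roller a)=L$ when $a$ is odd. In particular the $N$-components are precisely the even bumps, so they are counted by neither $\ell$ nor $r$; the odd left bumps are exactly the $R$-components and the odd right bumps are exactly the $L$-components.

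Next I would apply the three reductions, using that $\equiv_B$ is by definition a congruence for disjunctive sum, so a summand may be replaced by any $\equiv_B$-equivalent position without changing $o(G+X)$ for any $X$. Part~(1) gives $a\roller\equiv_B 0$ and $\roller a\equiv_B 0$ for even $a$, so every $N$-component is deleted. Part~(2) gives $a\roller\equiv_B 1\roller$ for odd $a$ and, by mirroring, $\roller a\equiv_B\roller1$ for odd $a$. Hence
\[
G\;\equiv_B\;r\cdot(1\roller)\;+\;\ell\cdot(\roller1).
\]
Part~(3), $1\roller+\roller1\equiv_B 0$, then lets me cancel $\min(\ell,r)$ complementary pairs, leaving $G\equiv_B(\ell-r)\cdot(\roller1)$ when $\ell>r$, $G\equiv_B 0$ when $\ell=r$, and $G\equiv_B(r-\ell)\cdot(1\roller)$ when $\ell<r$.

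Finally I would compute the outcome of each surviving canonical sum. The key observation is that in $m\cdot(1\roller)$ with $m\geq 1$ the roller of every component sits at the right end, and each Left move only empties a component; consequently Right never has a legal move, so the very first moment it is Right's turn he cannot move and wins. Thus $o\bigl(m\cdot(1\roller)\bigr)=R$ for $m\geq1$, and symmetrically $o\bigl(m\cdot(\roller1)\bigr)=L$, while $o(0)=N$ since under the misère convention the player to move at the empty position cannot move and therefore wins. Reading these back through the three cases yields $o(G)=L$ when $\ell>r$, $o(G)=N$ when $\ell=r$, and $o(G)=R$ when $\ell<r$, as claimed. I do not expect a genuinely hard step here: Lemma~\ref{lem:reductions} front-loads all the difficulty, and the only points needing care are that the reductions may be applied summand-by-summand (immediate from the definition of $\equiv_B$) and the misère bookkeeping that makes $o(0)=N$ rather than $P$.
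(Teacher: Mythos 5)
Your proof is correct and follows essentially the same route as the paper's: reduce each summand to $1\roller$, $\roller1$, or $0$ via Lemma~\ref{lem:reductions}, cancel complementary pairs using part (3), and read off the outcome of the residue. You are in fact slightly more careful than the published proof, which swaps the Left/Right labels on $\roller1$ and $1\roller$ in one sentence; your identification ($\roller1$ is a Left win, $1\roller$ a Right win, consistent with Lemma~\ref{lem:one-side}) is the correct one, and your explicit computation of $o\bigl(m\cdot(\roller1)\bigr)$ closes the final step the paper leaves implicit.
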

\begin{proof}
By Lemma \ref{lem:reductions}, all components
reduce to one of $\roller1$, $1\roller$, or $0$, which are Right, Left, and Next wins respectively.
Thus $\ell$ counts the number of $1\roller$ components and $r$ the number of $\roller1$. The result now follows by applying the third part of 
Lemma \ref{lem:reductions}. 
\end{proof}

\section{Comments}
Let us answer the question from the Introduction. The reduced position of $1,1,6,2,4,5\roller4,3,3,4,6$ is $6,2,4,5\roller4,3,3,4,6$, and $M(\alpha)=5$ and
$M(\beta)=3$. Thus Right wins by moving just past $b_3$. The position of $6,4,2,1\roller2,3,5,7,8$ is already reduced, and $M(\alpha)=1$, $M(\beta)=\infty$. Left wins by playing just past $a_2$. 

A general solution of the disjunctive sum of \textsc{cricket pitch} positions would also require 
most of the general theory of Blocking games to be developed. However, disjunctive sums of positions
of the form $2,\alpha\roller\beta,2$ where $\alpha$ and $\beta$ consist of $1$s may be tractable. 

The `two moves is not better than one' property of \textit{cricket pitch} means that the game is still interesting when played under different terminating conditions, i.e. the other universes. The reader may wish to try solving \textsc{cricket pitch} in:

\textit{Impartial play:} Both players can move the roller to the left and to the right.

\textit{Dicotic play:} Either both players have a move or the game is over. For a single pitch, moving the roller to an end finishes the game.

\textit{Dead-ending play:} Once a player, say Left, has no moves, no move by Right will create moves for Left. In \textsc{cricket pitch}, if the roller stops at an end (or next to a $0$), then the roller is only allowed to move away from that end (or $0$).

See \cite{Dorb013,Fish022,Lar021,Lar022} for more on these universes.

\end{document}